\theoremstyle{plain}
\newtheorem{theorem}{Theorem}
\newtheorem{lemma}{Lemma}
\newtheorem{corollary}{Corollary}
\theoremstyle{definition}
\newtheorem{definition}{Definition}
\theoremstyle{remark}
\newtheorem{remark}{Remark}
\newtheorem{example}{Example}
\numberwithin{equation}{section} 
\def\mbf#1{\mbox{\boldmath$#1$}}
\newcommand{\R}{\mathbb{R}}
\begin{document}
\title[
\hspace{-2mm}  Application of Divergent Series to particular solutions of ODE]
{Note on the Application of Divergent Series \\for Finding a Particular Solution\\
to a Nonhomogeneous Linear Ordinary Differential Equation  with Constant Coefficients}
\author{Jozef Fecenko}
\address{University of Economics\\ Faculty of Economic Informatics\\ Department of Mathematics and Actuarial Science\\  Dolnozemsk\'a St.\\ 832 04 Bratislava\\ Slovakia}
\email{jozef.fecenko@euba.sk}

\thanks{This work was supported by the Slovak Grant Agency VEGA No. 1/0647/19}

\begin{abstract}
There are many methods for finding a particular solution to a nonhomogeneous linear ordinary differential equation (ODE) with constant coefficients. The method of undetermined coefficients, Laplace transform method and differential operator method are generally known. The latter mentioned method sometimes uses the Maclaurin expansion of an inverse differential operator but only in the case when the obtained series is convergent. The present work deals also with how to find a particular solution if  the corresponding infinite series is divergent using only the terms of that series and the method of summation of divergent series.
\end{abstract}

\subjclass[2010]{34A30, 40C15}

\keywords{linear ordinary differential equations, differential operator, Euler summation, Pad\' e approximant}
\maketitle

\section{Introduction}

The history of divergent series goes back to L. Euler, who had an idea that any divergent series should have a natural sum, without first defining what is meant by the sum of a divergent series, which led to confusing and contradictory results, until A. L. Cauchy gave a rigorous definition of the sum of a (convergent) series. In 1890, E. Ces\' aro realized that one could give a rigorous definition of the sum of some divergent series. In the years after, several other mathematicians gave other definitions of the sum of divergent series, although these are not always compatible: different definitions can give different answers for the sum of the same divergent series; so, when talking about the sum of a divergent series, it is necessary to specify which summation method we are using \cite{Wi}. A historical turning point in the study of divergent series was the publication of the G. H. Hardy monograph Divergent series  \cite{H}. At present, there are many publications in this field, including their applications \cite{Be}, \cite{M}. In this paper we have applied the Euler summation method of divergent series.  Algorithms were implemented in the wxMaxima. The wxMaxima program is an interface for working with the freely downloadable open source CAS (computer algebra system) program Maxima. In order to be able to apply this method in all cases, we have established and proved one new theorem (Theorem \ref{th:moj}) and given its consequence.
This method, like the method of undetermined coefficients, can be used only in special cases, if the right-hand side of the differential equation is typical, i.e. it is a constant, a polynomial function, exponential function $e^{\alpha x}$, sine or cosine functions $\sin{\beta x}$ or $\cos {\beta x}$, or finite sums and products of these functions (with constants $\alpha$ and $\beta$).

\section{Maclaurin expansion of a rational function}
We are interested in the Maclaurin series of a rational function.
\begin{theorem}\label{th:1} Let us consider a rational function which is defined at zero, with the conventional normalization
\begin{equation}\label{eq:rf}
R_{[L/M]}( t) = \frac{p_{0} + p_{1}t + \cdots + p_{L}t^{L}}{1 + q_{1}t + \cdots + q_{M}t^{M}}
\end{equation}
Let
\begin{equation}\label{eq:Ms}
C(t) = c_{0} + c_{1}t + c_{2}t^{2} + \cdots
\end{equation}
be a Maclaurin series of the rational function (\ref{eq:rf}). Then
\begin{equation}\label{eq:koef}
c_{n} = \mbf{q} \cdot\left( c_{n - M},c_{n - M + 1},\ldots,c_{n - 1} \right) + p_{n},\quad  n = 0,1,2,\ldots\
\end{equation}
where
$\mbf{q} =\left(-q_{M},- q_{M - 1},\ldots,- q_1 \right),$ $c_{n} = 0$ for 
$n < 0$ and $p_n = 0$ for $n > L$ and product 
$\mbf{q} \cdot\left( c_{n - M},c_{n - M + 1},\ldots,c_{n - 1} \right) $
is a dot product.
\end{theorem}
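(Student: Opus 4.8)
The plan is to clear the denominator and compare coefficients of like powers of $t$. Because $R_{[L/M]}$ is defined at $t=0$ and its denominator equals $1$ there, the Maclaurin series $C(t)$ has a positive radius of convergence; on a neighbourhood of the origin the defining relation
\begin{equation}
C(t)\bigl(1 + q_{1} t + \cdots + q_{M} t^{M}\bigr) = p_{0} + p_{1} t + \cdots + p_{L} t^{L}
\end{equation}
therefore holds, and the same identity is valid verbatim in the ring of formal power series. I would then extract the coefficient of $t^{n}$ from both sides and read off the recurrence.

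To carry this out, set $q_{0} = 1$ and write the left-hand side as a Cauchy product,
\begin{equation}
\Bigl(\sum_{i\ge 0} c_{i} t^{i}\Bigr)\Bigl(\sum_{j=0}^{M} q_{j} t^{j}\Bigr) = \sum_{n\ge 0}\Bigl(\sum_{j=0}^{M} q_{j} c_{n-j}\Bigr) t^{n},
\end{equation}
where the convention $c_{n-j}=0$ for $n-j<0$ makes the inner sum finite and correct for every $n$, including the boundary range $n<M$. Matching this against the right-hand side $\sum_{n\ge 0} p_{n} t^{n}$, in which $p_{n}=0$ for $n>L$, yields $\sum_{j=0}^{M} q_{j} c_{n-j} = p_{n}$ for all $n\ge 0$. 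Isolating the $j=0$ term (recall $q_{0}=1$) gives $c_{n} = p_{n} - \sum_{j=1}^{M} q_{j} c_{n-j}$.

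It then remains only to recognize that this is exactly the asserted formula. Indeed, with $\mbf{q} = (-q_{M}, -q_{M-1},\ldots,-q_{1})$ the dot product unwinds as
\begin{equation}
\mbf{q}\cdot\left(c_{n-M}, c_{n-M+1},\ldots, c_{n-1}\right) = \sum_{k=1}^{M}(-q_{k})c_{n-k} = -\sum_{k=1}^{M} q_{k} c_{n-k},
\end{equation}
so $c_{n} = \mbf{q}\cdot\left(c_{n-M},\ldots,c_{n-1}\right) + p_{n}$, as claimed. I expect the computation itself to be routine; the only points demanding care are bookkeeping ones, namely keeping the simultaneous sign reversal and index reversal in $\mbf{q}$ aligned with the order of the vector $(c_{n-M},\ldots,c_{n-1})$, and checking that the two truncation conventions ($c_{n}=0$ for $n<0$ and $p_{n}=0$ for $n>L$) correctly cover the boundary cases at both ends rather than merely the generic index range.
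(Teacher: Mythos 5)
Your proposal is correct and follows essentially the same route as the paper's own proof: clearing the denominator to obtain the identity $\left(1 + q_{1}t + \cdots + q_{M}t^{M}\right)C(t) = p_{0} + p_{1}t + \cdots + p_{L}t^{L}$, comparing coefficients of $t^{n}$, and rewriting the resulting recurrence as the stated dot product. Your additional care with the Cauchy-product bookkeeping and the boundary conventions ($c_{n}=0$ for $n<0$, $p_{n}=0$ for $n>L$) only makes explicit what the paper leaves implicit.
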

\begin{proof}
The statement follows from the identity
\begin{equation}\label{eq:aa}
p_0 + p_1t + \cdots + p_Lt^L = \left( 1 + q_1t + \cdots + q_Mt^M \right)\left( c_0 + c_1t + c_2t^2 + \cdots \right)
\end{equation}
after expanding the right-hand side (\ref{eq:aa}) and comparing coefficients for the
same powers of \(t\) we get
\[ q_Mc_{n - M} + q_{M - 1}c_{n - M + 1} + \cdots + q_{1}c_{n - 1}+c_n = p_{n}\]
From this it immediately follows  that
\[\begin{array}{r@{\,=\,}l}
c_{n} & - q_{M}c_{n - M} - q_{M - 1}c_{n - M + 1} - \cdots - q_{1}c_{n - 1} + p_{n}\\
& \left(-q_{M}, -q_{M - 1},\ldots, -q_{1} \right) \cdot \left( c_{n - M}, c_{n - M + 1},\ldots,\ c_{n - 1} \right) + p_{n}\\
& \mbf{q} \cdot \left(c_{n - M},c_{n - M + 1},\ldots,c_{n - 1} \right) + p_{n}
\end{array}\]
$n = 0,1,2,\ldots$
\end{proof}

\begin{example}\label{ex:fra}
Find the coefficients of the Maclaurin series of the function 
\begin{equation}\label{eq:ab}
R_{[2/4]}(t) = \frac{1 + t + 4t^{2}}{1 - 2t + 2t^{2} + 4t^{3} + 4t^{4}}
\end{equation}
\end{example}
We have $\mbf{q}=(-4, - 4, - 2, 2)$. Then
\[\begin{array}{r@{\,=\,}l}
c_{0}& \mbf{q} \cdot \left(c_{-4},c_{-3},c_{-2},c_{-1} \right) + p_{0} = (-4,-4,-2,2) \cdot (0,0,0,0) + 1 = 1\\[0.5ex]
c_{1}& \mbf{q} \cdot \left(c_{-3},c_{-2},c_{-1},c_{0} \right) + p_{1} =
 (-4,-4,-2,2) \cdot (0,0,0,1) + 1 = 3\\[0.5ex]
c_{2}& \mbf{q} \cdot \left(c_{-2},c_{-1},c_{0},c_{1} \right) + p_{2} = \mbf{q} \cdot ( 0,0,1,3) + 4 = 8\\[0.5ex]
c_{3}&6,\quad c_{4} = -20,\quad c_{5} = -96, \quad c_{6} = -208, \quad 
c_{7} = - 168, \quad c_{8} = 544
\end{array}\]

We have the Maclaurin series of (\ref{eq:ab}):
\begin{equation}\label{eq:ac}
\begin{array}{cl}
&\displaystyle{\frac{1 + t + 4t^{2}}{1 - 2t + 2t^{2} + 4t^{3} + 4t^{4}}}\\[1.8ex]
= 1 + 3t + 8t^{2} + &6t^{3} - 20t^{4} - 96t^{5} - 208t^{6} - 168t^{7} + 544t^{8} + \cdots
\end{array}
\end{equation}

\begin{remark}\label{rema} 
It is interesting to note that the sequence $\{c_n\}$ in (\ref{eq:Ms}) is homogeneous linear recurrence with constant coefficients with initial conditions.  
\end{remark}
\begin{lemma}\label{polkon} Let $t_{1},t_{2},\ldots,t_{M}$ be roots of the denominator in (\ref{eq:rf}). Then, the series (\ref{eq:Ms}) converges for all  $t\in \R$ for which
\begin{equation}
\left| t \right| < r = \min\{\left| t_{1} \right|,\left| t_{2} \right|,\ldots,{|t}_{M}|\}.
\end{equation}
\end{lemma}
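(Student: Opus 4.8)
The plan is to recognize Lemma \ref{polkon} as the statement that the radius of convergence of the Maclaurin series of a rational function equals the distance from the origin to its nearest pole, and that the poles can occur only at the roots $t_1,\dots,t_M$ of the denominator. First I would factor the denominator in the form $1+q_1 t+\cdots+q_M t^M=\prod_{j=1}^{M}\left(1-\tfrac{t}{t_j}\right)$, which is legitimate because both sides are degree-$M$ polynomials with the same roots $t_j$ and both take the value $1$ at $t=0$ (matching the normalization in (\ref{eq:rf})).

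Next I would perform a partial fraction decomposition, writing $R_{[L/M]}(t)=P(t)+\sum_j\sum_{k=1}^{m_j}\frac{A_{jk}}{(1-t/t_j)^k}$, where $P$ is a polynomial (nonzero only if $L\ge M$), $m_j$ is the multiplicity of $t_j$, and the $A_{jk}$ are constants. The polynomial part is entire, and each elementary term expands in a binomial (generalized geometric) series $\frac{1}{(1-t/t_j)^k}=\sum_{n\ge0}\binom{n+k-1}{k-1}\left(\tfrac{t}{t_j}\right)^n$, which converges absolutely precisely for $|t|<|t_j|$. A finite sum of such series therefore converges for every $t$ lying in the intersection of their disks of convergence, that is, for $|t|<\min_j|t_j|=r$. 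By uniqueness of Taylor coefficients the series so obtained is exactly (\ref{eq:Ms}), which establishes the claim.

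The main obstacle to watch is that the roots $t_j$ need not be real: some may occur in complex conjugate pairs, since the $q_i$ are real. I would handle this by working with absolute values throughout, since the binomial series above converges absolutely whenever $|t/t_j|<1$ regardless of whether $t_j$ is real or complex; the imaginary parts cancel in the finite sum to leave the real coefficients $c_n$, but convergence is governed only by the moduli $|t_j|$, so the bound $r=\min_j|t_j|$ is unaffected. Repeated roots are absorbed into the multiplicities $m_j$ and the binomial exponents $k$, causing no further difficulty.

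As an alternative consistent with Remark \ref{rema}, I could avoid partial fractions and argue directly from the recurrence: for $n>L$ the coefficients satisfy the homogeneous linear recurrence with characteristic polynomial $\lambda^M+q_1\lambda^{M-1}+\cdots+q_M$, whose roots are exactly $1/t_1,\dots,1/t_M$ (this polynomial is the reciprocal of the denominator). The closed form $c_n=\sum_j P_j(n)\,t_j^{-n}$ then gives $\limsup_{n}|c_n|^{1/n}=\max_j|t_j|^{-1}=1/r$, so the Cauchy--Hadamard formula yields radius of convergence $r$, recovering the lemma.
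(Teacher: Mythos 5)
Your proof is correct, but there is nothing in the paper to compare it against: Lemma \ref{polkon} is stated there without any proof, as a quoted fact about Maclaurin series of rational functions. Your partial-fraction argument supplies a complete and standard justification: the factorization $1+q_1t+\cdots+q_Mt^M=\prod_{j}\left(1-t/t_j\right)$ is legitimate (both sides are degree-$M$ polynomials with the same roots counted with multiplicity and value $1$ at $t=0$; note also that every $t_j\neq 0$ for the same reason, so $r>0$), the binomial expansion of each term $A_{jk}\left(1-t/t_j\right)^{-k}$ converges absolutely on $|t|<|t_j|$, and uniqueness of Taylor coefficients identifies the resulting sum with the series (\ref{eq:Ms}); your remark that complex-conjugate roots cause no trouble because only the moduli $|t_j|$ enter is also right. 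One small overstatement occurs in your alternative argument via the recurrence of Remark \ref{rema}: the equality $\limsup_n |c_n|^{1/n}=1/r$ can fail when the numerator of (\ref{eq:rf}) shares a factor with the denominator vanishing at a root of minimal modulus, for then the polynomials $P_j(n)$ attached to those roots vanish and the true radius of convergence exceeds $r$. Since Lemma \ref{polkon} claims only convergence for $|t|<r$ and not that $r$ is the exact radius, the inequality $\limsup_n |c_n|^{1/n}\le 1/r$, which your closed form $c_n=\sum_j P_j(n)t_j^{-n}$ does deliver, is all that is needed; with that wording fixed, both of your routes are valid proofs of the lemma.
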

\begin{example}\label{ex:frb}
The series in (\ref{eq:ac}) converges for $t\in \R$ for which
$\left| t \right| < \frac{\sqrt{3 - \sqrt{5}}}{2}.$
\end{example}

\section{Pad\' e approximant}
 
A Pad\' e approximant~is the ``best" approximation of a function by a rational function~of given order. Pad\' e approximants are usually superior to Maclaurin series when functions contain poles, because the use of rational functions allows them to be well-represented. It often gives better approximation of the function than truncating its Maclaurin series, and it may still work where the Maclaurin series does not converge.

In \cite{Ba}, \cite{W} an algorithm to determine the Pad\' e approximant of functions that are expressed by Maclaurin expansion is described. This technique can be used for finding a rational function if we know its Maclaurin expansion. A necessary condition for this is to know the upper estimate (as small as possible) of the degrees of a polynomial in the numerator and the denominator. In general, without knowledge of this requirement, an accurate estimate of the rational function is not possible. 
Incorrectly estimating at least one of the degrees of polynomials, we would only approximate the searched for rational function.
However, when accurately estimating the degree of the numerator and the denominator, or even when estimating when at least one degree would exceed the true value, we always get the same accurate estimate of the sought after rational function, whose Maclaurin expansion we know.
If we estimate the degree of the numerator $L$ and the denominator $M$, then to determine the rational function we need a Maclaurin expansion to the degree $L + M$, inclusive. 

 There are many effective methods for determining the rational function of its Maclaurin expansion. One of them is described in the mentioned works \cite{Ba}, \cite{W}:

\begin{equation}\label{eq:acc}
R_{[L/M]}( t ) = \frac{\left| \begin{matrix}
c_{L - M + 1} & c_{L - M + 2} & \cdots &c_L & c_{L + 1} \\
 \vdots & \vdots & \ddots & \vdots \\
c_{L} & c_{L + 1} & \cdots &c_{L+M-1}& c_{L + M} \\
\sum\limits_{j = M}^{L}{c_{j - M}t^{j}} &
 \sum\limits_{j = M - 1}^{L}{c_{j - M + 1}t^{j}} & \cdots & 
 \sum\limits_{j = 1}^{L}{c_{j-1}t^{j}}&\sum\limits_{j = 0}^{L}{c_{j}t^{j}} \\
\end{matrix} \right|}{\left| \begin{matrix}
c_{L - M + 1} & c_{L - M + 2} & \cdots &c_L &  c_{L + 1} \\
 \vdots & \vdots & \ddots & \vdots \\
c_{L} & c_{L + 1} & \cdots &c_{L+M-1} &  c_{L + M} \\
t^{M} & t^{M - 1} & \cdots &t & 1 \\
\end{matrix} \right|}
\end{equation}
where \(c_{n} = 0\ \)for \(n < 0\) and \(q_{n} = 0\) for \(n > M.\)

The element in the last row, in the $k$-th column, in the numerator is $\sum\limits_{j=M-k+1}^Lc_{j-M+k-1}t^j.$

Let us interpret the described algorithm on the series on the right side in (\ref{eq:ac})
\begin{equation}\label{eq:ad}
1 + 3t + 8t^{2} + 6t^{3} - 20t^{4} - 96t^{5} - 208t^{6} - 168t^{7} + 544t^{8} + \cdots \end{equation}
Suppose that we have estimated the degree of the polynomial of the denominator $L = 4$ and numerator $M = 3$.

Then using (\ref{eq:acc}) we have
\[R_{[3/4]}(t ) = \frac{\left| \begin{matrix}
1 & 3 & 8 & 6 & - 20 \\
3 & 8 & 6 & - 20 & - 96 \\
8 & 6 & - 20 & - 96 & - 208 \\
6 & - 20 & - 96 & - 208 & - 168 \\
0 & t^{3} & t^{2} + 3t^{3} & t + 3t^{2} + 8t^{3} & 1 + 3t + 8t^{2} + 6t^{3} \\
\end{matrix} \right|}{\left| \begin{matrix}
1 & 3 & 8 & 6 & - 20 \\
3 & 8 & 6 & - 20 & - 96 \\
8 & 6 & - 20 & - 96 & - 208 \\
6 & - 20 & - 96 & - 208 & - 168 \\
t^{4} & t^{3} & t^{2} & t & 1 \\
\end{matrix} \right|}\]
\begin{equation}\label{eq:add}
= \frac{14400t^{2} + 3600t + 3600}{14400t^{4} + 14400t^{3} + 7200t^{2} - 7200t + 3600} = \frac{4t^{2} + t + 1}{4t^{4} + 4t^{3} + 2t^{2} - 2t + 1}
\end{equation}

We have got the same rational function as in (\ref{eq:ab}). We have estimated the degree of the polynomial of the numerator as 3. After the calculation we can see that it is only 2. Note that we would get the same result for $L = 2$, but also for $L>2$ and $M> 4$. When estimating $L<2$ or
$M<4$, we get only an approximate estimate of the rational function.

\section{Differential operator and matrix differential operator}

It is sometimes convenient to adopt the notation
$Dy$, $D^{2}y$, $D^{3}y,\ldots, D^{n}y$ to denote $\displaystyle\frac{dy}{dx}$,  $ \displaystyle\frac{d^{2}y}{dx^{2}}$, $\displaystyle\frac{d^{3}y}{dx^{3}},\cdots,\displaystyle\frac{d^{n}y}{dx^{n}}$. The symbols $Dy$, $D^{2}y,\ldots$ are called {\it{differential operators}} \cite{Ch}, \cite{F1} and have properties analogous to those of algebraic quantities \cite{Hu}.

Problems of the operator calculus for solving linear differential equations are well dealt with in several publications, e.g., in publications \cite{Ch}, \cite{Hu}, \cite{S}. 

\noindent Using the operator notation, we shall agree to write the differential equation
\begin{equation}\label{eq:DR} 
\left(a_{n}y^{(n)} + a_{n - 1}y^{(n - 1)} + \cdots + a_{1}y^{\prime} + a_{0}\right)y = f(x),\quad a_n \neq 0,\ a_i\in \mathbb{R}
\end{equation}
as
\begin{equation}\label{eq:DRD}
\left(a_{n}D^{n} + a_{n - 1}D^{n - 1} + \cdots + a_{1}D + a_{0}I\right)y = f(x)
\end{equation}
where $I$ is the identity operator.  
\begin{remark}The identity operator $I$ maps a real number to the same real number $Ir=r$ \cite{W1}. To simplify writing, we will omit it in the following formulas.
\end{remark}

We will also use a concise notation for (\ref{eq:DRD}).
\begin{equation}\label{eq:DRDp}
\phi(D)y = f(x),
\end{equation}
where
\begin{equation}\label{eq:DO}
\phi(D) = a_nD^n + a_{n - 1}D^{n - 1} + \cdots + a_{1}D + a_{0}
\end{equation}
is called an \emph{operator polynomial} in \(D.\)
If we want to emphasize the degree of the polynomial operator (\ref{eq:DO}), we shall
write it in the form \(\phi_{n}( D )\).
\begin{definition}
If \(S = \{\mbf{v}_{1},\mbf{v}_{2},\ldots,\mbf{v}_{n}\}\)
is a set of vectors in a vector space \(V,\) then the set of all linear
combinations of
\(\mbf{v}_{1},\mbf{v}_{2}, \ldots,\mbf{v}_{n}\) is called the \emph{span} of
\(\mbf{v}_{1},\mbf{v}_{2},\ldots,\mbf{v}_{n}\) and is denoted by
\(\textrm{span}(\mbf{v}_{1},\mbf{v}_{2},\ldots,\mbf{v}_{n})\)
or \textrm{span}(S).
\end{definition}
Let \(G\) be a~vector space of all differentiable functions. Consider the
subspace \(V \subset G\) given by
\begin{equation}\label{eq:spanfi}
V=\textrm{span}\big( f_{1}(x),f_{2}(x),\ldots,f_{n}(x) \big),
\end{equation}
where we assume that functions \(f_{1}(x),f_{2}(x),\ldots,f_{n}(x)\)
are linearly independent. Since the set 
\(B = \{ f_{1}(x),f_{2}(x),\ldots,f_{n}(x)\}\) is linearly independent, it is a~basis for V.\par
The functions \(f_{i}(x),\ i = 1,2,\ldots,n\) expressed in basis \(B\) using base vector coordinates are usually written
\[\left\lbrack f_{1}(x)\right\rbrack_{B} = \begin{bmatrix}
1 \\
0 \\
\vdots \\
0 \\
\end{bmatrix},\ \left\lbrack f_{2}(x)\right\rbrack_{B} = \begin{bmatrix}
0 \\
1 \\
\vdots \\
0 \\
\end{bmatrix},\ldots,\left\lbrack f_{n}(x) \right\rbrack_{B} = \begin{bmatrix}
0 \\
0 \\
 \vdots \\
1 \\
\end{bmatrix}\]
The vector \(\left\lbrack f_{i}(x) \right\rbrack_{B}\) has in the \emph{i-}th row 1 and~0 otherwise.

Further, assume that the differential operator \( D\) maps \(V\)  into itself.\\
 Let 
\[D( f_{i}( x) )= \sum_{j = 1}^{n}c_{ij}f_{j}( x),\quad  i = 1,2,\ldots,n,\]
where \(c_{ij} \in \mathbb{R}\),  \(i,j = 1,2,\ldots,n\) are constants.
Then
\[\left\lbrack D\left( f_{i}(x) \right) \right\rbrack_{B} = \begin{bmatrix}
c_{i1} \\
c_{i2} \\
 \vdots \\
c_{in} \\
\end{bmatrix},\quad  i = 1,2,\ldots,n \]
and (see \cite{P})
\begin{equation}\label{eq:Poole}
\begin{array}{r@{\,=\,}l}
\left\lbrack D \right\rbrack_{B} & \left\lbrack \left\lbrack D\left( f_{1}(x) \right) \right\rbrack_{B} \Shortstack{. . . .}\left\lbrack D\left( f_{2}(x) \right) \right\rbrack_{B} \Shortstack{. . . .}\cdots  \Shortstack{. . . .}\left\lbrack D\left( f_{n}(x) \right) \right\rbrack_{B} \right\rbrack\\ [2ex]
& 	
\begin{bmatrix}
	c_{11} & c_{21} & \cdots & c_{n1} \\
	c_{12} & c_{22} & \cdots & c_{n2} \\
	\vdots & \vdots & \ddots & \vdots \\
	c_{1n} & c_{2n} & \cdots & c_{nn} \\
\end{bmatrix}
\end{array}
\end{equation}
If
\[f(x) = \sum_{i = 1}^{n}{\alpha_{i}f_{i}(x),\quad \alpha_{i} \in \mathbb{R},\quad i = 1,2,\ldots,n},\quad  f(x) \in V \]
then
\[\left\lbrack f(x) \right\rbrack_{B} = \begin{bmatrix}
\alpha_{1} \\
\alpha_{2} \\
 \vdots \\
\alpha_{n} \\
\end{bmatrix}\]
We express the derivative of the function \(f(x)\):
\[Df(x) = \sum_{i = 1}^{n}{\alpha_{i}Df_{i}(x)} = \sum_{i = 1}^{n}\alpha_{i}\sum_{j = 1}^{n}{c_{ij}f_{j}(x)} = \sum_{j = 1}^{n}{\sum_{i = 1}^{n}\alpha_{i}c_{ij}f_{j}(x)}\]
respectively
\[\left\lbrack D( f(x)) \right\rbrack_{B}= \begin{bmatrix}
\sum\limits_{i=1}^n c_{i1}\alpha_{i} \\
\sum\limits_{i=1}^n c_{i2}\alpha_{i} \\
 \vdots \\
\sum\limits_{i=1}^n c_{in}\alpha_{i} \\
\end{bmatrix}=\begin{bmatrix}
c_{11} & c_{21} & \cdots & c_{n1} \\
c_{12} & c_{22} & \cdots & c_{n2} \\
 \vdots & \vdots & \ddots & \vdots \\
c_{1n} & c_{2n} & \cdots & c_{nn} \\
\end{bmatrix}\begin{bmatrix}
\alpha_{1} \\
\alpha_{2} \\
 \vdots \\
\alpha_{n} \\
\end{bmatrix}\]
Let us further simply and denote \(\left\lbrack D \right\rbrack_{B}\) as  \(\mathcal{D}_B\). 

The matrix \(\mathcal{D}_B\) we will call \emph{the matrix differential operator  corresponding to a vector space} \(V\)  \emph{with the considered basis} \(B\). 

Denote  
\[\left\lbrack\mathcal{D}(f(x) \right\rbrack_{B}=\mbf{f}_B^\prime = \begin{bmatrix}
\beta_{1} \\
\beta_{2} \\
\mbf{\vdots} \\
\beta_{n} \\
\end{bmatrix}\quad\textrm{and}\quad \left\lbrack f(x) \right\rbrack_{B} = \mbf{f}_B\]
then
\begin{equation}\label{eq:lint}
\mbf{f}_B^\prime=\mathcal{D}_B\mbf{f}_B
\end{equation}
Note that the matrix transformation \(\mathcal{D}_B:V \rightarrow V\) defined by (\ref{eq:lint})
is a linear transformation.

As mentioned in (\ref{eq:Poole}), the $i$-th, \(\left( i = 1,2,\ldots,n \right)\ \)column of the matrix \(\mathcal{D}\) expresses the derivative of the function \(f_{i}(x).\)

\begin{definition} Let
\({1}/{\phi(D)}f(x)\) (or \(\phi^{- 1}(D)f(x)\)) be defined as a~particular solution \(y_{p}\) of the differential equation (\ref{eq:DRD}) such that 
\(\phi(D)y_{p} = f(x).\) We call \({1}/{\phi(D)}\) the \emph{inverse differential operator} to \(\phi(D)\) \cite{S}. Analogously we define  an inverse matrix differential operator: Let \(\mathcal{D}_B\) 
be a matrix differential operator corresponding to a vector space  \(V\)  with the considered basis \(B\) for an equation (\ref{eq:DRD}) with right-side \(\mbf{f}\). Let \(\phi^{-1}\left(\mathcal{D}\right)\mbf{f}\) be defined as a~particular solution \(\mbf{y}_{p}\) of the differential equation (\ref{eq:DRD}), such that \(\phi(\mathcal{D})\mbf{y}_{p}=\mbf{f}.\) Then we call \(\phi^{-1}(\mathcal{D})\) an \emph{inverse matrix differential operator} to \(\phi(\mathcal{D})\). 
\end{definition}

\section{Inverse of a differential operator and action on~a~continuous function}

Now, we will deal with the evaluation of

\[\frac{1}{\phi\left( D \right)}f(x)\]
only in the case if $1/{\phi(D)}$ is expressed in ascending powers of \(D\). We assume that \(f(x)\) is a differentiable function. For example

\[\begin{array}{r@{\,=\,}l}
\displaystyle\frac{1}{1 - D}e^{\frac{x}{2}} & \left( 1 + D + D^{2} + \cdots \right)e^{\frac{x}{2}} = e^{\frac{x}{2}} + \frac{1}{2}e^{\frac{x}{2}} + \frac{1}{4}e^{\frac{x}{2}} + \frac{1}{8}e^{\frac{x}{2}} + \cdots \\[1ex]
& e^{\frac{x}{2}}\left( 1 + \frac{1}{2} + \frac{1}{4} + \frac{1}{8} + \cdots \right) = 2e^{\frac{x}{2}}
\end{array}\]

Really \(y = 2e^{\frac{x}{2}}\) is the particular solution of the differential equation
\[\left( 1 - D \right)y = e^{\frac{x}{2}}\]
In this case there is no problem as the series
\(1 + \displaystyle\frac{1}{2} + \displaystyle\frac{1}{4} + \displaystyle\frac{1}{8} + \cdots\) converges.

\subsection{The Ces{\'a}ro summability of a series}

Let \(\sum\limits_{n = 0}^{\infty}c_{n}\) be a number series, and let
\[s_{k} = c_{0} + c_{1} + c_{2} + \cdots + c_{k}\] be its \(k\)-th partial sum. The series 
\(\sum\limits_{n = 1}^{\infty}c_{n}\)
is called Ces{\'a}ro summable (or summable by arithmetic means), with Ces{\'a}ro sum \(A\in \R\), if
\[\lim_{n \rightarrow \infty}\frac{1}{n+1}\sum_{k = 0}^{n}s_{k} = A.\]
For example
\[\begin{array}{r@{\,=\,}l}
\displaystyle\frac{1}{1 + D}e^{x} & \left( 1 - D + D^{2} -  D^{3} + \cdots \right)e^{x} = e^{x} - e^{x} + e^{x} - e^{x} + \cdots\\[0.5ex]
 & e^{x}(1 - 1 + 1 - 1 + \cdots)
\end{array}\]
The series \(1 - 1 + 1 - 1 + \cdots\) does not converge. But it is Ces{\'a}ro summable, because
\[\lim_{n \rightarrow \infty}\frac{1}{n+1}\sum_{k = 0}^{n}s_{k} = \frac{1}{2}.\]
Hence
\[ \frac{1}{1 + D}e^{x} = \frac12e^{x}\]
Indeed, $y= \frac12e^{x}$ is the particular solution of the equation $(D+1)y=e^x$. 

\subsection{The Euler method} \cite{H}
If the numerical series \(\sum\limits_{n = 0}^{\infty}{c_{n}t^{n}\ }\)is convergent for small \(t,\) and defines a function \(f(t)\) of the complex variable \(t,\) one-valued and regular in an open and connected region containing the origin and the point \(t = 1\) and \(f( 1) = s,\) then we call \( s\) the \(\mathfrak{E}\) sum of \(\sum\limits_{n = 0}^{\infty}c_{n}\). 

In our case we take for the region the maximum domain of \(f(t)\). If \(\sum\limits_{n = 0}^{\infty}{c_{n}\ }\) is summable by the Euler method \(\mathfrak{E}\) to \(s\), we will denote 
\(\sum\limits_{n = 0}^{\infty}{c_{n}\ } = s\left( \mathfrak{E} \right).\)

All series that are summable by the Ces\'aro method to \(s\) are summable to the same value \(s\) by the Euler method.

Yet another simple example
\[\begin{array}{r@{\,=\,}l}
\displaystyle\frac{1}{1 - 2D}e^{x} & \left( 1 + 2D + 4D^{2} + 8D^{3} + \cdots \right)e^{x} = e^{x} + 2e^{x} + 4e^{x} + 8e^{x} + \cdots \\[0.5ex]
& e^{x}(1 + 2 + 4 + 8 + \cdots)
\end{array}\]

The numerical series \(1 + 2 + 4 + \cdots +2^n+ \cdots\) diverges. Let us create the power series

\begin{equation}\label{eq:af}
1 + 2t + 4t^{2} + \cdots + 2^nt^n + \cdots
\end{equation}

The series (\ref{eq:af}) defines a single-valued and analytic function on the
region \(\left| t \right| < \frac{1}{2}\) containing the origin

\[f\left( t \right) = \frac{1}{1 - 2t} = 1 + 2t + 4t^{2} + 8t^{3} + \cdots\]

The function defined in the domain \(\left| t \right| < \displaystyle\frac{1}{2}\) can be extended to the function \(\displaystyle\frac{1}{1 - 2t}\) by means of an analytic continuation defined on
\(\mathbb{C\ \backslash}\left\{ - \frac{1}{2} \right\}\). This extended function is always given unambiguously.

Since

\[f\left( 1 \right) = \frac{1}{1 - 2 \cdot 1} = - 1\]
We have
\[1 + 2 + 4 +\cdots + 2^n + \cdots = - 1\left( \mathfrak{E} \right).\]
Then
\[\frac{1}{1 - 2D}e^{x} = - e^{x}.\]
Indeed, \(y=-e^{x}\) is the particular solution of the differential equation\linebreak  (\(1 - 2D)y = e^{x}.\) 

\section{Using a divergent series for finding a particular solution of an
  ordinary nonhomogeneous linear differential equation with constant
  coefficients}

In this section we will try to explain the basic principles for finding the particular solution of an ordinary nonhomogeneous linear differential equation with constant coefficients with a special type of
right-hand side using the Euler method of summable divergent series.

\begin{lemma}\label{lb} \cite{F2}, \cite{L} If the Taylor series expansion for \(f(x)\) about the origin

\[f(x) = \sum_{k = 0}^{\infty}\frac{f^{\left( k \right)}(0)}{k!}x^{k}\]
converges for all \(x\), for which \(\left| x \right| < r\), then the
matrix series

\[f\left( A \right) = \sum_{k = 0}^{\infty}\frac{f^{\left( k \right)}(0)}{k!}A^{k}\]
converges for those \(A\), for which \(\rho\left(A\right) < r\), where
\(\rho\left(A\right)\) is the spectral radius of the matrix
\({A.}\) As a convention \(A^{0} = I,\) where \(I\)is the identity matrix.
\end{lemma}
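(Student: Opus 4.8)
The plan is to work in the algebra $M_n(\mathbb{C})$ of $n\times n$ matrices, regarding $A$ as a complex matrix and equipping $M_n(\mathbb{C})$ with any submultiplicative matrix norm $\|\cdot\|$ (say, one induced by a vector norm). Since this space is finite-dimensional, all such norms are equivalent and the space is complete, so a matrix series converges as soon as it converges absolutely. Writing $a_k=f^{(k)}(0)/k!$, the hypothesis that $\sum_k a_k x^k$ converges for $|x|<r$ says its radius of convergence is at least $r$, whence by Cauchy--Hadamard $\limsup_{k\to\infty}|a_k|^{1/k}\le 1/r$. It therefore suffices to show $\sum_{k=0}^{\infty}|a_k|\,\|A^{k}\|<\infty$ whenever $\rho(A)<r$. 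I may assume $\rho(A)>0$, since if $\rho(A)=0$ then $A$ is nilpotent, the series terminates, and there is nothing to prove.

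First I would apply the root test to the nonnegative series $\sum_k|a_k|\,\|A^k\|$. The key analytic ingredient is \emph{Gelfand's spectral-radius formula}
\[\rho(A)=\lim_{k\to\infty}\|A^{k}\|^{1/k},\]
which is precisely what lets one replace the operator norm (submultiplicativity alone would only give the far cruder bound $\|A^{k}\|^{1/k}\le\|A\|$) by the spectral radius. Because $\|A^k\|^{1/k}$ then converges to the positive limit $\rho(A)$, one has $\limsup_k\!\bigl(b_k\,\|A^k\|^{1/k}\bigr)=\rho(A)\,\limsup_k b_k$ for any nonnegative $b_k$, and so
\[\limsup_{k\to\infty}\bigl(|a_k|\,\|A^{k}\|\bigr)^{1/k}=\rho(A)\,\limsup_{k\to\infty}|a_k|^{1/k}\le\frac{\rho(A)}{r}<1,\]
so the root test yields absolute, hence ordinary, convergence of $\sum_k a_k A^k$ in $M_n(\mathbb{C})$.

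I expect the only genuine obstacle to be Gelfand's formula itself; everything else is routine. To keep the argument self-contained one can bypass it via the Jordan canonical form. Write $A=PJP^{-1}$. Since each partial sum satisfies $\sum_{k=0}^{N}a_kA^{k}=P\bigl(\sum_{k=0}^{N}a_kJ^{k}\bigr)P^{-1}$ and $J$ is block-diagonal, convergence of the series for $A$ is equivalent to its convergence for each Jordan block $J_i=\lambda_iI+N_i$, where $N_i^{m}=0$ for the block size $m$. Expanding $J_i^{k}=\sum_{j\ge 0}\binom{k}{j}\lambda_i^{\,k-j}N_i^{\,j}$ and collecting powers of $N_i$, the block series reorganizes into the finitely many scalar series $\sum_{k}a_k\binom{k}{j}\lambda_i^{\,k-j}$ for $0\le j\le m-1$, each of which equals (up to the factor $1/j!$) the $j$-th termwise derivative of $\sum_k a_k z^{k}$ evaluated at $z=\lambda_i$.

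The decisive point is then that $|\lambda_i|\le\rho(A)<r$ places every eigenvalue strictly inside the disc of convergence, where a power series may be differentiated termwise arbitrarily often. Hence each of these scalar series converges (to $f^{(j)}(\lambda_i)/j!$), which proves convergence of the matrix series and, as a by-product, identifies the limit $f(A)$ with the classical function-of-a-matrix carrying $f(\lambda_i),f'(\lambda_i),\dots$ along the super-diagonals of its blocks. The only technical care required is the standard justification of termwise differentiation inside the radius of convergence and of interchanging the finite $N_i$-expansion with the limit in $k$.
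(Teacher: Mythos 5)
Your proposal is correct, but note that the paper never proves this lemma at all: it is stated with citations to \cite{F2} and \cite{L} (Lancaster's \emph{Theory of Matrices}), so there is no internal proof to compare against. Of your two arguments, the Jordan-form one is essentially the classical proof found in the cited literature: decompose $A=PJP^{-1}$, reduce to a single block $\lambda I+N$ with $N$ nilpotent, and observe that the series reorganizes into finitely many scalar series which are (up to $1/j!$) termwise derivatives of $\sum_k a_k z^k$ evaluated at $z=\lambda$, all of which converge because $|\lambda|\le\rho(A)<r$ lies strictly inside the disc of convergence. Your first argument, via Gelfand's formula $\rho(A)=\lim_k\|A^k\|^{1/k}$ and the root test, is a genuine alternative: it is shorter, avoids the Jordan canonical form entirely, and generalizes verbatim to elements of any Banach algebra, at the price of invoking Gelfand's formula (itself usually proved by spectral theory or, in the matrix case, by the very Jordan decomposition you are trying to avoid). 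Both routes are sound; the details you flag as requiring care (the limsup product rule needing $\rho(A)>0$, the nilpotent case, the interchange of the finite $N$-expansion with the limit in $k$, and termwise differentiation inside the radius of convergence) are all handled correctly or are standard.
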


Let us consider the differential equation
\begin{equation}\label{eq:ba}
\phi_{n}\left( D \right)y = g(x)
\end{equation}
Let
\[V=\textrm{span}\left( f_{1}(x),f_{2}(x),\ldots,f_{m}(x \right))\]
be a vector space of differentiable functions with the basis $B$ of $V$
\begin{equation}\label{eq:bb}
B=\left\{ f_{1}(x),f_{2}(x),\ldots,f_{m}(x) \right\}
\end{equation}

and let for every function $f(x) \in V$ be $f^{\prime}(x) \in V$.

 Assume that \(g(x) \in V\). The Maclaurin expansion
\begin{equation}\label{eq:bc}
\frac{1}{\phi_{n}(D)} = c_{0} + \sum_{k = 1}^{\infty}{c_{k}D^{k}}, c_{0}\ne 0
\end{equation}
converges for \(|D|<r,\, 0<r<\infty\).  Note that $c_0$ must be different from zero. Otherwise, the value 
$1/\phi_n(0)$ would not be defined.

If \(t\in \R\) is a parameter, then
\begin{equation}\label{eq:bd}
\frac{1}{\phi_{n}(tD)} = \sum_{k = 0}^{\infty}c_{k}t^{k}D^{k}
\end{equation}
The series (\ref{eq:bd}) converges depending on \(t\). If \(t = t_{0}\), then it converges for\linebreak 
\(| D | < \displaystyle\frac{r}{|t_{0}|}.\)

Denote by \(\mathcal{D}\) (of the type \(m \times m\)) the matrix differential operator corresponding to the basis $B$ in (\ref{eq:bb}), and let the matrix \(\phi_{n}\left( \mathcal{D} \right)\) be a regular matrix. Then, the matrix series (due to the Lemma \ref{lb})
\begin{equation}\label{eq:bee}
\left(\phi_{n}(\mathcal{D}) \right)^{-1} = c_{0}\mbf{I}_{m} + \sum_{k = 1}^{\infty}{c_{k}\mathcal{D}^{k}}
\end{equation}
converges only if the spectral radius \(\rho\left( \mathcal{D} \right) < r.\) If \( r \le \rho(A)<\infty\) then we put for \(\mathcal{D}\) in (\ref{eq:bee}) the matrix \(t\mathcal{D,}\) where
\[0<t < \frac{r}{\rho\left( \mathcal{D} \right)}\]
for which the spectral radius
\[\rho\left( t\mathcal{D} \right) = t\ \rho\left( \mathcal{D} \right) < r.\]
Using this in Lemma \ref{lb} we get that the matrix series with parameter \(t\)
\begin{equation}\label{eq:bf}
\left( \phi_{n}\left( t\mathcal{D} \right) \right)^{- 1} = \sum_{k = 0}^{\infty}{c_{k}{t^{k}\mathcal{D}}^{k}}
\end{equation}
converges for \(\rho\left( t\mathcal{D} \right) < r.\) Now we use the inverse matrix formula based on the conjugate matrix and we get that the elements of the matrix \(\left( \phi_{n}\left( t\mathcal{D} \right) \right)^{- 1}\) will be rational functions of \(t\) defined at \(t = 0\) and at \(t = 1\). 
We get \(\left( \phi_{n}\left( \mathcal{D} \right) 
\right)^{- 1}\mbf{g}\) the particular solution of (\ref{eq:ba}), where \(\mbf{g}=g(x)_B\).

Multiplying equation (\ref{eq:bf}) from the right-hand side by the vector \(\mbf{g,\ }\)we have
\begin{equation}\label{eq:bg}
\left( \phi_{n}\left( t\mathcal{D} \right) \right)^{- 1}\mbf{g} =\sum_{k = 0}^{\infty}{c_{k}{t^{k}\mathcal{D}}^{k}}\mbf{g}
\end{equation}
The left and right-hand sides of (\ref{eq:bg}) are matrices with \(m\) rows and one column.  Each row on the right-hand side is an infinite series of powers of
\(t,\) to which it corresponds on the left-hand side of (\ref{eq:bg}) in the same row to a rational function. 
Each rational function (on the left-hand side) is an analytic continuation (except for a finite number of points from the complex plane) to its maximum domain of the corresponding function expressed by a power series on the right-hand side, and all conditions for using the Euler summation method are satisfied. It follows for \(t=1{:}\)
\begin{equation}\label{eq:bgg}
\sum_{k = 0}^{\infty}c_{k}\mathcal{D}^{k}\mbf{g}=\left( \phi_{n}\left( \mathcal{D} \right) \right)^{- 1}\mbf{g}(\mathfrak{E})
\end{equation}
The formulas (\ref{eq:bg}), (\ref{eq:bgg}) represent one of the main fundamental results of the paper. 
\begin{remark}\label{LM}
If \(\mathcal{D}\) is a \(m\times m\) matrix and (\ref{eq:ba})  is a differential equation of order $n$ then from the left-side of (\ref{eq:bg}) follows that to find a rational function $R_{[L/M]}(t)$ using the Pad\' e approximant it is enough to take a truncated power series of degree $L+M$, where $L=n(m-1)$ and $M=nm$. 
(In a special case can be the final fraction in the form $R_{[(L-k)/(M-k)]}(t)$, if the fraction $R_{[L/M]}(t)$ is truncated by polynomial of the $k$-th degree.) 
\end{remark}
\begin{remark}
To calculate the right-hand side of (\ref{eq:bg}) using e.g., the software wxMaxima it is better to express (\ref{eq:bg}) in the form

\hspace{1mm}

\begin{center}
	c[0]$\cdot$g+sum(c[k]$\cdot$t\^{}k$\cdot$X:D.X,k,1,n)\\
\end{center}

\hspace{1mm}

\noindent The symbol of the assignment command is : (in wxMaxima) \cite{Ma}. The initial value X is g, $n$ is a finite upper bound of summation (see Remark \ref{LM}). 
\end{remark}

\begin{example}\label{ex:bii} Using the matrix differential operator and summable divergent series by the Euler method  find the particular solution of the differential equation
\begin{equation}\label{eq:bi}
\left( 1 - D - D^{2} \right)y = e^{x}\sin x - 2e^{x}\cos x
\end{equation}
\end{example} 
\noindent\emph{Solution.} We will solve the equation first using the matrix differential operator method \cite{F1}. From the method of undetermined coefficients it follows that the solution of the differential equation will belong to the vector space 
\[V=\textrm{span}\left( e^{x}\sin x,e^{x}\cos x \right)\]
with the basis
\[B=\left\{ e^{x}\sin x,e^{x}\cos x \right\}\]
The relevant matrix differential operator is
\[\mathcal{D} = \begin{bmatrix}
1 & - 1 \\
1 & 1 \\
\end{bmatrix}\]
The solution of the differential equation belongs to $V$.  We have to solve the matrix equation
\[\begin{array}{r@{\,=\,}l}
\left( \mbf{I}_{2} - \mathcal{D -}\mathcal{D}^{2} \right)\mbf{y}_p&
[ e^{x}\sin x - 2e^{x}\cos x]_B\\[1ex]
\begin{bmatrix}
0 & 3 \\
 - 3 & 0 \\
\end{bmatrix}\mbf{y}_p&\begin{bmatrix}
1 \\
 -2 \\
\end{bmatrix}
\end{array}\]

\begin{equation}\label{eq:bj}
\mbf{y}_{p}=\begin{bmatrix}
0 & 3 \\
 -3 & 0 \\
\end{bmatrix}^{- 1}\begin{bmatrix}
1 \\
 -2 \\
\end{bmatrix} = \begin{bmatrix}
0 & - \displaystyle\frac{1}{3} \\
\displaystyle\frac{1}{3} & 0 \\
\end{bmatrix}\begin{bmatrix}
1 \\
 - 2 \\
\end{bmatrix} = \begin{bmatrix}
\displaystyle\frac{2}{3} \\[1.2ex]
\displaystyle\frac{1}{3} \\
\end{bmatrix}
\end{equation}

The particular solution to equation (\ref{eq:bi}) is 
\[y_p = \frac{2}{3}\,e^{x}\sin x + \frac{1}{3}\,e^{x}\cos x\]

Now, we will look at solving this example from the point of view of divergent series. We express the particular solution  of the equation (\ref{eq:bi}) in the form
$$y_p=\frac{1}{1 - D - D^{2}}\,(e^{x}\sin x - 2e^{x}\cos x)$$

Let us expand in powers of the $D$ the inverse operator $\displaystyle\frac{1}{1 - D - D^{2}}$ using the method described in the beginning of the paper (Theorem \ref{th:1}) and let this expansion act on the right-hand side of equation (\ref{eq:bi}). We have
\[\begin{array}{r@{\,=\,}l} y_p&(1+D+2{{D}^{2}}+3{{D}^{3}}+5{{D}^{4}}+8{{D}^{5}}+\cdots)
(e^{x}\sin x - 2e^{x}\cos x)\\ [1ex]
&(e^x\sin x-2e^x\cos x)+(3e^x\sin x-e^x\cos x)+(8e^x\sin x+4e^x\cos x)\\ [1ex]
&+(6e^x\sin x+18e^x\cos x)+(-20e^x\sin x+40e^x\cos x)+\cdots\\ [1ex]
&(1+3+8+6-20+\cdots)e^x\sin x+(-2-1+4+18+\cdots)e^x\cos  x
\end{array}\] 
We need to determine the sum of the corresponding divergent series by the Euler method. However, we see that the presented calculation procedure is impractical. 
 
 We use the matrix representation using the right-hand side of (\ref{eq:bg}), where we determine the coefficients $c_k$ according to Theorem \ref{th:1}
 \[\mbf{f}(t) = \sum_{k=0}^{6}{(1,1)\cdot}\left(c_{n-2},c_{n-1} \right)t^{k}\mathcal{D}^k\mbf{g}\]
 where
 \(\mbf{g} =\begin{bmatrix} 1 \\  - 2 \\ \end{bmatrix}\),
 \(c_{- 1} = 0\), \(c_{0} = 1\). The upper limit of summation has been determined using Remark \ref{LM} ($m=2$, $n=2$, $M=m\cdot n=4$, $L=n\cdot (m-1)=2$, $L+M=6$). 
  
We get
\[\mbf{f}(t)=\begin{bmatrix}
1 + 3t + 8t^{2} + 6t^{3} - 20t^{4} - 96t^{5} - 208t^{6} + \cdots\\
 - 2 - t + 4t^{2} + 18t^{3} + 40t^{4} + 32t^{5} - 104t^{6} +\cdots \\
\end{bmatrix}\]

It follows from (\ref{eq:ad}), (\ref{eq:add}) that

\[1 + 3t + 8t^{2} + 6t^{3} - 20t^{4} - 96t^{5} - 208t^{6} +\cdots=\frac{4t^{2} + t + 1}{4t^{4} + 4t^{3} + 2t^{2} - 2t + 1}=f_{1}(t)\]
and
\[f_{1}(1) =\frac{2}{3}\]
So
\[1 + 3 + 8 + 6 - 20 - 96 - 208 + \cdots = \frac{2}{3}\mathfrak{(E)}\]

We will now find the Euler sum of the second divergent series
\[- 2 - 1 + 4 + 18 + 40 + 32 - 104 +\cdots \]

It is not our goal to present different types of Pad\' e approximant algorithms. Many computer algebra systems software include a procedure for calculating these. For example in Mathematica the procedure\\ 
PadeApproximant~[expr,{x,$x_0$,{m,n}}] gives the  approximant to expr about the point x=$x_0$, with numerator order m and denominator order n \cite{Wo}. \\
The wxMaxima procedure\newline  pade(taylor\_series, numer\_deg\_bound, denom\_deg\_bound) returns a list of all rational functions which have the given Taylor series expansion where the sum of the degrees of the numerator and the denominator is less than or equal to the truncation level of the power series, i.e. are "best" approximants, and which additionally satisfy the specified degree bounds. Where taylor\_series is a univariate Taylor series, numer\_deg\_bound and denom\_deg\_bound are positive integers specifying the degree bounds on the numerator and denominator \cite{Ma}.  

Here we use the procedure in wxMaxima (Figure \ref{fig:pa})
\begin{figure}[!hbp]
{\includegraphics[width=1\textwidth]{Pade-Maxima1.PNG}}
\caption{Pad\' e approximant in wxMaxima}\label{fig:pa}
\end{figure}

We have
\[- 2 - t + 4t^{2} + 18t^{3} + 40\ t^{4} + 32t^{5} - 104t^{6} + \cdots=\frac{2t^{2} + 3t - 2}{4t^{4} + 4t^{3} + 2t^{2} - 2t + 1}=f_{2}(t) \]

The value \(f_{2}(1)\), after the analytical continuation of \(f_{2}(t)\) to the whole\linebreak complex plane except for the four zero complex roots of the equation\linebreak 
\(4t^{4} + 4t^{3} + 2t^{2} - 2t + 1 = 0\), is
\[f_{2}(1) = \frac{1}{3}\]
So
\[- 2 - 1 + 4 + 18 + 40\  + 32 - 104 + \cdots = \frac{1}{3}\mathfrak{(E)}\]

We have obtained the same solution as in (\ref{eq:bj})
\[\mbf{f}\left( 1 \right) = \begin{bmatrix}
\displaystyle\frac{2}{3} \\[1.2ex]
\displaystyle\frac{1}{3} \\
\end{bmatrix}_B\]
\[y_p = \frac{2}{3}e^{x}\sin x + \frac{1}{3}e^{x}\cos x\]

If we want to calculate directly the appropriate rational functions (for control), we can use the idea of (\ref{eq:bf}). We get
\[\left( \mbf{I}_{2} - \mathcal{D}t - (\mathcal{D}{t)}^{2} \right)^{- 1}\begin{bmatrix}
1 \\
 - 2 \\
\end{bmatrix} = \begin{bmatrix}
\displaystyle\frac{4t^{2} + t + 1}{4t^{4} + 4t^{3} + 2t^{2} - 2t + 1} \\
\displaystyle\frac{2t^{2} + 3t - 2}{4t^{4} + 4t^{3} + 2t^{2} - 2t + 1} \\
\end{bmatrix}\]

Further we will need some theorems.
\begin{lemma}\label{chen}\cite{Ch}, \cite{Hu}, \cite{S}  Let $\phi_n(D)$ be a polynomial of $D$ of degree $n$. Then the nonhomogenous linear differential equation 
\begin{equation}\label{eq:bk}
\phi_n(D)y=Ae^{\alpha x}
\end{equation}
$A\in\R$, $\alpha$ is real or complex, has a particular solution 

\begin{displaymath}
y_p = \left\{ \begin{array}{ll}
\displaystyle\frac{Ae^{\alpha x}}{\phi_n(\alpha)} & \textrm{if  $\phi_n(\alpha) \ne 0$}\\
\displaystyle\frac{Ax^ke^{\alpha x}}{\phi_n^{(k)}(\alpha)} & \textrm{if $\phi_n^{(i)}(\alpha)=0$ 
for $i=0,1,2,\ldots,k-1$ but $\phi_n^{(k)}(\alpha)\ne 0$}
\end{array} \right.
\end{displaymath}

 where  $\phi_n^{(0)}(D)=\phi_n(D).$
\end{lemma}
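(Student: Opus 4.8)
The plan is to verify each branch of the stated formula directly, the whole argument resting on the elementary action of $D$ on an exponential. First I would record that $D^{j}e^{\alpha x}=\alpha^{j}e^{\alpha x}$ for every $j\ge 0$, whence $\phi_{n}(D)e^{\alpha x}=\phi_{n}(\alpha)e^{\alpha x}$ by linearity applied to the coefficients of $\phi_{n}$. In the first case $\phi_{n}(\alpha)\ne 0$, so substituting the candidate $y_{p}=Ae^{\alpha x}/\phi_{n}(\alpha)$ gives $\phi_{n}(D)y_{p}=\tfrac{A}{\phi_{n}(\alpha)}\phi_{n}(\alpha)e^{\alpha x}=Ae^{\alpha x}$; nothing more is needed.

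For the resonant case the key tool is the \emph{exponential shift identity}
\[\phi_{n}(D)\bigl(e^{\alpha x}u(x)\bigr)=e^{\alpha x}\,\phi_{n}(D+\alpha)\,u(x),\]
which I would establish by first checking $D(e^{\alpha x}u)=e^{\alpha x}(D+\alpha)u$ from the product rule, then proving $D^{j}(e^{\alpha x}u)=e^{\alpha x}(D+\alpha)^{j}u$ by induction on $j$, and finally forming the linear combination prescribed by the coefficients of $\phi_{n}$. This converts the action of $\phi_{n}(D)$ on $e^{\alpha x}u$ into the action of the shifted polynomial $\phi_{n}(D+\alpha)$ on $u$ alone.

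Next I would expand $\phi_{n}$ about $\alpha$, writing $\phi_{n}(D+\alpha)=\sum_{j=0}^{n}\tfrac{\phi_{n}^{(j)}(\alpha)}{j!}D^{j}$. The hypothesis $\phi_{n}^{(i)}(\alpha)=0$ for $i=0,1,\dots,k-1$ annihilates every term with $j<k$, so the lowest surviving term is $\tfrac{\phi_{n}^{(k)}(\alpha)}{k!}D^{k}$. Applying this operator to $u(x)=x^{k}$ and using $D^{k}x^{k}=k!$ together with $D^{j}x^{k}=0$ for $j>k$, every term except $j=k$ drops out, giving $\phi_{n}(D+\alpha)x^{k}=\phi_{n}^{(k)}(\alpha)$. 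Feeding the candidate $y_{p}=Ax^{k}e^{\alpha x}/\phi_{n}^{(k)}(\alpha)$ through the shift identity then yields $\phi_{n}(D)y_{p}=\tfrac{A}{\phi_{n}^{(k)}(\alpha)}e^{\alpha x}\phi_{n}^{(k)}(\alpha)=Ae^{\alpha x}$, exactly as required.

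The only genuinely delicate point is the bookkeeping: making the shift identity precise and correctly matching it against the Taylor expansion of the operator polynomial. Once those are in place, both branches collapse to a one-line verification. I would also note that the entire argument is purely algebraic in the symbol $D$, so it is valid for complex $\alpha$ verbatim, and no separate treatment of the real and complex cases is needed.
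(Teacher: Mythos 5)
Your proof is correct. Note, however, that the paper itself offers no proof of this lemma at all: it is quoted as a known result with citations to \cite{Ch}, \cite{Hu}, and \cite{S}, so there is no internal argument to compare yours against. Your route --- reduce everything to the exponential shift identity $\phi_n(D)\left(e^{\alpha x}u\right)=e^{\alpha x}\phi_n(D+\alpha)u$, Taylor-expand the operator polynomial about $\alpha$ so that the resonance hypothesis kills the terms of order below $k$, and then evaluate on $u=x^k$ --- is the standard textbook argument, and it is fully consistent with the machinery the paper does use: the same operator shift theorem is invoked (citing \cite{Ch}, \cite{F1}, \cite{S}) in the paper's own proof of Theorem \ref{th:moj}, where the identity $(D-\alpha)^k\left(Ax^ke^{\alpha x}\right)=Ae^{\alpha x}k!$ is step (1). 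Your observation that the nonresonant branch is just the case $k=0$ of the resonant one, and that the argument is purely algebraic in $D$ and hence valid for complex $\alpha$ without modification, is a small but genuine tidying-up relative to the way the two cases are usually stated separately.
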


\begin{theorem}\label{th:moj} Let $\phi_n(D)$ be a polynomial of $D$ of degree $n$. 
Let us consider the differential equation (\ref{eq:bk})  where 
$\phi_n^{(i)}(\alpha)=0$ for 
$i=0,1,2,\ldots,k-1$ but $\phi_n^{(k)}(\alpha)\ne 0$.
Then the particular solution of the differential equation 
\begin{equation}\label{eq:mja}
\phi_n^{(k)}(D)y=Ax^ke^{\alpha x}
\end{equation}
is also the particulation  solution of the equation (\ref{eq:bk}).
\end{theorem}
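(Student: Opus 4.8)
The plan is to work directly with the explicit shape of the particular solution of (\ref{eq:mja}) and to push it through the operator $\phi_n(D)$, exploiting a two-sided cancellation produced by the multiplicity hypothesis. Since $\phi_n^{(k)}(\alpha)\neq 0$, the number $\alpha$ is \emph{not} a root of the polynomial $\phi_n^{(k)}$, so there is no resonance for equation (\ref{eq:mja}); the particular solution delivered by the operator (series) method therefore has the form $y_p=e^{\alpha x}R(x)$ with $R$ a polynomial of degree exactly $k$. First I would record this form and name the leading coefficient $r_k$ of $R$.

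The computational tool I would use is the exponential shift rule $\phi(D)\big[e^{\alpha x}R(x)\big]=e^{\alpha x}\,\phi(D+\alpha)\big[R(x)\big]$ together with the exact Taylor expansion $\phi_n(D+\alpha)=\sum_{i=0}^{n}\frac{\phi_n^{(i)}(\alpha)}{i!}D^{i}$. Applying this to $y_p$ gives $\phi_n(D)y_p=e^{\alpha x}\sum_{i=0}^{n}\frac{\phi_n^{(i)}(\alpha)}{i!}R^{(i)}(x)$. The hypothesis $\phi_n^{(i)}(\alpha)=0$ for $i=0,1,\ldots,k-1$ kills every term with $i<k$, while $\deg R=k$ forces $R^{(i)}=0$ for every $i>k$. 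Exactly one term, $i=k$, survives, and since $R^{(k)}=k!\,r_k$ is constant this collapses to $\phi_n(D)y_p=\phi_n^{(k)}(\alpha)\,r_k\,e^{\alpha x}$.

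It then remains to identify $r_k$. Running the same shift rule on the left-hand side of (\ref{eq:mja}) and comparing the coefficients of the top-degree term $x^k e^{\alpha x}$ yields $\phi_n^{(k)}(\alpha)\,r_k=A$; substituting this back gives $\phi_n(D)y_p=A e^{\alpha x}$, which is precisely (\ref{eq:bk}). Hence $y_p$ is a particular solution of (\ref{eq:bk}). Equivalently, one may compare $y_p$ with the resonance solution $\frac{A}{\phi_n^{(k)}(\alpha)}x^k e^{\alpha x}$ from Lemma \ref{chen}: the difference is $e^{\alpha x}$ times a polynomial of degree $<k$, i.e. a combination of $e^{\alpha x},xe^{\alpha x},\ldots,x^{k-1}e^{\alpha x}$, which are homogeneous solutions of (\ref{eq:bk}) because $\alpha$ is a root of $\phi_n$ of multiplicity $k$; adding such a function leaves the right-hand side unchanged.

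The step I expect to require the most care is not any single computation but the bookkeeping that makes the two truncations meet: I must check that the index range forced from below by the multiplicity ($i\ge k$) and from above by the degree of $R$ ($i\le k$) really leaves a single surviving term, and in particular that $\deg R$ is \emph{exactly} $k$. This is also where the phrase ``the particular solution'' must be read as the one produced by the operator method, namely of the pure form $e^{\alpha x}R(x)$ with no spurious homogeneous component: adding a homogeneous solution of $\phi_n^{(k)}(D)y=0$ associated with a root $\beta\neq\alpha$ of $\phi_n^{(k)}$ would in general break the conclusion, since such a $\beta$ need not be a root of $\phi_n$.
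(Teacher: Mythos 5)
Your proof is correct, but it is organized quite differently from the paper's. The paper's argument runs through the factorization $\phi_n(D)=(D-\alpha)^k\phi_{n-k}(D)$: it first proves, via the shift theorem, that $(D-\alpha)^k\big(Ax^ke^{\alpha x}\big)=Ak!\,e^{\alpha x}$, then derives the Leibniz-rule identity $\phi_n^{(k)}(\alpha)=k!\,\phi_{n-k}(\alpha)$, and finally substitutes $y=\frac{1}{\phi_n^{(k)}(D)}Ax^ke^{\alpha x}$ into (\ref{eq:bk}), pushing $\phi_n(D)$ through the inverse operator and evaluating the resulting operator ratios on $e^{\alpha x}$. You never factor $\phi_n$ and never manipulate inverse operators at all: you fix the explicit shape $y_p=e^{\alpha x}R(x)$ with $\deg R=k$ of the non-resonant solution of (\ref{eq:mja}), apply $\phi_n(D)$ forward via the shift rule and the Taylor expansion $\phi_n(D+\alpha)=\sum_i\frac{\phi_n^{(i)}(\alpha)}{i!}D^i$, and let the hypothesis (killing $i<k$) and the degree of $R$ (killing $i>k$) isolate the single term $i=k$, with the leading coefficient $r_k=A/\phi_n^{(k)}(\alpha)$ read off from (\ref{eq:mja}) itself. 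What your route buys is rigor and economy: every step applies a polynomial operator to a concrete function, whereas the paper's chain $A\,k!\,\frac{\phi_{n-k}(D)}{\phi_n^{(k)}(D)}e^{\alpha x}=A\,k!\,\frac{\phi_{n-k}(\alpha)}{\phi_n^{(k)}(\alpha)}e^{\alpha x}$ relies on formal commutation and evaluation rules for operator ratios that are left implicit; you also make explicit the caveat (which the paper only brushes against in its closing remark) that the statement concerns the particular solution of pure form $e^{\alpha x}R(x)$, since adding a homogeneous solution of $\phi_n^{(k)}(D)y=0$ attached to a root of $\phi_n^{(k)}$ that is not a root of $\phi_n$ would destroy the conclusion. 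What the paper's route buys is the by-product identity $\phi_n^{(k)}(\alpha)=k!\,\phi_{n-k}(\alpha)$ and consistency with the inverse-operator calculus (and with Lemma \ref{chen}) used throughout the rest of the paper; your alternative closing, which compares $y_p$ with the resonance solution $\frac{A}{\phi_n^{(k)}(\alpha)}x^ke^{\alpha x}$ of Lemma \ref{chen} modulo kernel elements of $\phi_n(D)$, is also valid and is perhaps the shortest correct argument of all three.
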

\begin{proof} For the proof of this theorem we need to prove two auxiliary relationships. Due to the operator shift theorem \cite{Ch}, \cite{F1}, \cite{S}  we have

\begin{itemize}
	\item[(1)]
	\[(D-\alpha)^k(Ax^ke^{\alpha x})=Ae^{\alpha x}(D+\alpha-\alpha)^kx^k=Ae^{\alpha x}k!\]
	\item[(2)] Let
	\[\begin{array}{r@{\,\,}l}
\phi_n(D)=&(D-\alpha)^k\phi_{n-k}(D),\quad\textrm{then}\\[1ex]
		{\phi^{(k)}_n(D)}\!=\!&
\left(\begin{matrix} k\\0 \end{matrix}\right) \left((D-\alpha)^k\right)^{(k)}\phi_{n-k}(D)\!+\!
\left(\begin{matrix} k\\ 1 \end{matrix}\right) \left((D-\alpha)^k\right)^{(k-1)}\phi^\prime_{n-k}(D)\!+\!\cdots\\ [1.8ex]
+&\left(\begin{matrix} k\\k \end{matrix}\right)(D-\alpha)^k \phi^{(k)}_{n-k}(D)
	\end{array}\]
\end{itemize}
hence
\[\phi^{(k)}_n(\alpha)=k!\phi_{n-k}(\alpha)\]
From (\ref{eq:mja}) we have
\begin{equation}\label{eq:rmja}
y=\frac{1}{\phi^{(k)}_n(D)}Ax^ke^{\alpha x}
\end{equation}
Substituting this in the (\ref{eq:bk}) we have
$$\phi_n(D)\frac1{\phi^{(k)}_n(D)}Ax^ke^{\alpha x}=A\frac{\phi_{n-k}(D)}{\phi^{(k)}_n(D)}
(D-\alpha)^{k}x^{k}e^{\alpha x}$$
$$=A k! \frac{\phi_{n-k}(D)}{\phi^{(k)}_n(D)}e^{\alpha x}=
A k! \frac{\phi_{n-k}(\alpha)}{\phi^{(k)}_n(D)}e^{\alpha x}= 
A k! \frac{\phi_{n-k}(\alpha)}{\phi^{(k)}_n(\alpha)}e^{\alpha x}=$$
$$=A k! \frac{\phi_{n-k}(\alpha)}{k! \phi_{n-k}(\alpha)}e^{\alpha x}=Ae^{\alpha x} $$
\end{proof}
\noindent This proves that (\ref{eq:rmja}) is also a particular solution of (\ref{eq:bk}). Note that the particulation solution of the differential equation (\ref{eq:mja}) may include the kernel of  $\phi_n(D)$ in (\ref{eq:bk}).

\begin{corollary}\label{cor:dosl}
Let $\phi_n(D)$ be a polynomial of $D$ of degree $n$. Let us consider the differential equation 
\begin{equation}\label{eq:jf1}
\phi_n(D)y=e^{\alpha x}(A \sin \beta x +B \cos \beta x)
\end{equation}
Let $\phi_n(\alpha+\beta i)=\phi^\prime_n(\alpha+\beta i)=\cdots=
\phi^{(k-1)}_n(\alpha+\beta i)=0$ but $\phi^{(k)}_n(\alpha+\beta i)\ne 0.$
Then, the particular solution of the differential equation 
\begin{equation}\label{eq:jf2}
\phi_n^{(k)}(D)y=x^{k}e^{\alpha x}(A \sin \beta x +B \cos \beta x)
\end{equation}
is also the particular  solution of the equation (\ref{eq:jf1}).
\end{corollary}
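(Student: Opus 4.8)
The plan is to reduce the trigonometric forcing term to the pure exponential case already settled in Theorem \ref{th:moj} and then recombine by linearity. First I would use Euler's identities to rewrite the right-hand side as a combination of two complex exponentials. Setting $\gamma = \alpha + \beta i$ and $\bar\gamma = \alpha - \beta i$, a short computation gives
\[
e^{\alpha x}(A\sin\beta x + B\cos\beta x) = C_1 e^{\gamma x} + C_2 e^{\bar\gamma x},
\qquad C_1 = \frac{B - Ai}{2},\ \ C_2 = \frac{B + Ai}{2},
\]
with $C_2 = \overline{C_1}$; the same constants reproduce $x^k e^{\alpha x}(A\sin\beta x + B\cos\beta x) = C_1 x^k e^{\gamma x} + C_2 x^k e^{\bar\gamma x}$. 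Because $\beta \ne 0$, the exponents $\gamma$ and $\bar\gamma$ are distinct, so the decomposition splits the problem cleanly into two independent exponential forcings.

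Next I would use that the coefficients of $\phi_n(D)$ are real (recall $a_i \in \mathbb{R}$ in (\ref{eq:DR})), so the nonreal roots of $\phi_n$ occur in conjugate pairs of equal multiplicity. Hence the hypothesis $\phi_n^{(i)}(\gamma) = 0$ for $i = 0, 1, \ldots, k-1$ with $\phi_n^{(k)}(\gamma) \ne 0$ forces the identical statement at $\bar\gamma$, and both $\gamma$ and $\bar\gamma$ are roots of $\phi_n$ of multiplicity exactly $k$. Theorem \ref{th:moj} therefore applies to each of the scalar problems $\phi_n(D)y = C_1 e^{\gamma x}$ and $\phi_n(D)y = C_2 e^{\bar\gamma x}$; although that theorem is phrased for real $A$, its proof invokes only the operator shift theorem and the linearity of the inverse operator, and so stays valid for the complex amplitudes $C_1, C_2$.

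I would then apply Theorem \ref{th:moj} to each exponential. Let $y_1$ denote the particular solution of $\phi_n^{(k)}(D)y_1 = C_1 x^k e^{\gamma x}$; by the theorem it also satisfies $\phi_n(D)y_1 = C_1 e^{\gamma x}$, and let $y_2$ be defined analogously for $\bar\gamma$. Since $C_2 = \overline{C_1}$, $\bar\gamma$ is the conjugate of $\gamma$, and $\phi_n$ has real coefficients, we get $y_2 = \overline{y_1}$, so $y_p := y_1 + y_2 = 2\,\mathrm{Re}(y_1)$ is real. Adding the two operator identities and using linearity of $\phi_n^{(k)}(D)$ and of $\phi_n(D)$ yields
\[
\phi_n^{(k)}(D)\,y_p = x^k e^{\alpha x}(A\sin\beta x + B\cos\beta x),
\qquad
\phi_n(D)\,y_p = e^{\alpha x}(A\sin\beta x + B\cos\beta x),
\]
so the single real function $y_p$ solves both (\ref{eq:jf2}) and (\ref{eq:jf1}), as claimed.

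The Euler-formula bookkeeping is routine; the step I would treat as the crux is the reality argument. One must confirm that the conjugate exponent $\bar\gamma$ inherits multiplicity $k$ from the real-coefficient hypothesis (so that the same operator $\phi_n^{(k)}(D)$ governs both pieces), and that the two complex particular solutions delivered by the theorem are genuine conjugates, so that their sum is exactly the real function asserted in the statement rather than merely a complex solution carrying the correct forcing.
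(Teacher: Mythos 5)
Your proposal is correct and takes essentially the approach the paper intends: the paper states Corollary \ref{cor:dosl} without any proof of its own, presenting it as an immediate consequence of Theorem \ref{th:moj}, and the implied argument is precisely your reduction --- write $e^{\alpha x}(A\sin\beta x+B\cos\beta x)=C_1e^{\gamma x}+C_2e^{\bar\gamma x}$ with $\gamma=\alpha+\beta i$, note that the real coefficients of $\phi_n$ transfer the multiplicity-$k$ hypothesis to $\bar\gamma$, apply the theorem (whose proof indeed works verbatim for complex amplitudes) to each exponential, and recombine by linearity. The only polish I would suggest is to \emph{define} $y_2:=\overline{y_1}$ and verify by conjugation that it solves the $\bar\gamma$-problem, rather than choosing $y_2$ independently and asserting $y_2=\overline{y_1}$, since particular solutions are determined only up to elements of the kernel of $\phi_n^{(k)}(D)$.
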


\begin{example} Determine a particular solution of the equation
\begin{equation}\label{eq:ca}
(D^2-4D+13)y=e^{2x}\left(4\sin 3x +2\cos 3x\right)
\end{equation}
\end{example}
\noindent\emph{Solution.}
First, we find the solution of the equation using a matrix differential operator \cite{F1}.
The roots of the characteristic equation
\(k^{2} - 4k + 13 = 0\) are \(2~+~3i,\,2~-~3i.\) Hence, the particular solution will be in the form
$$y=xe^{2x}\left(A\sin 3x +B\cos 3x\right)$$
This means that the particular solution (\ref{eq:ca}) belongs to the vector space
\[V =\textrm{span}\left(xe^{2x}\sin 3x,xe^{2x}\cos 3x,e^{2x}\sin 3x,e^{2x}\cos 3x\right)\]
with the basis
\[B = \left\{xe^{2x}\sin 3x,xe^{2x}\cos 3x,e^{2x}\sin 3x,e^{2x}\cos 3x\right\}\]
The matrix differential operator is 
\[\mathcal{D} = \begin{bmatrix}
2 & - 3 & 0 & 0 \\
3 & 2 & 0 & 0 \\
1 & 0 & 2 & -3 \\
0 & 1 & 3 & 2 \\
\end{bmatrix}\]
Then 
$$\mathcal{D}^2-4\mathcal{D}+13\mbf{I_4}=
\left[\begin{array}{cccc}
	0 & 0 & 0 & 0 \\
	0 & 0 & 0 & 0 \\
	0 & -6 & 0 & 0 \\
	6 & 0 & 0 & 0 \\
\end{array}\right]$$

The matrix $\mathcal{D}^{2} - 2\mathcal{D +}2\mbf{I}_{4}$ is singular. For this case it is possible to use for example the method of undetermined coefficients or the method described in  \cite{F1}. 

Now we shall find a particular solution to the equation (\ref{eq:ca}) using Corollary~\ref{cor:dosl}. We have to solve the equation
\begin{equation}\label{eq:cb}
(2D-4)\tilde{y}_p=xe^{2x}(4\sin 3x+2\cos 3x)
\end{equation}
The matrix differential operator $\mathcal{D}$ is the same. So we have to solve the matrix equation
$$ (2\mathcal{D}-4\mbf{I_4})\tilde{\mbf{y}}_p=[xe^{2x}(4\sin 3x+2\cos 3x)]_B$$
$$\begin{bmatrix}
0 & -6 & 0 & 0 \\
6 & 0 & 0 & 0 \\
2 & 0 & 0 & -6 \\
0 & 2 & 6 & 0 \\\end{bmatrix}
\tilde{\mbf{y}}_p=\begin{bmatrix}
4\\
2\\
0\\
0
\end{bmatrix}$$
$$\tilde{\mbf{y}}_p=\begin{bmatrix}
0 & -6 & 0 & 0 \\
6 & 0 & 0 & 0 \\
2 & 0 & 0 & -6 \\
0 & 2 & 6 & 0 \end{bmatrix}^{-1}\begin{bmatrix}
4\\
2\\
0\\
0
\end{bmatrix}\renewcommand{\arraystretch}{2}
=\begin{bmatrix}
0 & \displaystyle\frac{1}{6} & 0 & 0\\
-\displaystyle\frac{1}{6} & 0 & 0 & 0\\
\displaystyle\frac{1}{18} & 0 & 0 & \displaystyle\frac{1}{6}\\
0 & \displaystyle\frac{1}{18} & -\displaystyle\frac{1}{6} & 0
\end{bmatrix}\begin{bmatrix}
4\\
2\\
0\\
0
\end{bmatrix}=\begin{bmatrix}
\displaystyle\frac13\\[1ex] 
-\displaystyle\frac23\\[1ex]
\displaystyle\frac29\\[1ex]
\displaystyle\frac19
\end{bmatrix}
$$
The particulation solution of (\ref{eq:cb}) and also (\ref{eq:ca}) is 
\begin{equation}\label{eq:ce}
\tilde{y}_p=\frac{1}{3}xe^{2x}\sin{3x}-\frac{2}{3}xe^{2x}\cos{3x}+
\frac{2}{9}e^{2x}\sin{3x}+\frac{1}{9}e^{2x}\cos{3x}
\end{equation}
but $\frac29e^{2x}\sin 3x$ and $\frac19e^{2x}\cos 3x$ belong to the kernel of the operator \linebreak $D^2-4D+13$, then we can write the particular solution (\ref{eq:ce}) of the di\-fferential equation (\ref{eq:ca}) in the simpler form
\begin{equation}\label{eq:cd}
y_p=\frac13xe^{2x}\sin{3x}-\frac23xe^{2x}\cos{3x}
\end{equation}
Since the matrix $\mathcal{D}^{2} - 4\mathcal{D} +13\mbf{I}_{4}$ is singular, the idea described from (\ref{eq:ba}) to (\ref{eq:bgg}) for calculating a particular solution using divergent series cannot be used. However, if we continued to calculate the Pad\' e approximant (for example with the support of the open source software wxMaxima), we would get ``rational functions": substituting for $t = 1$ we get meaningless expressions. This is not the way how to find the correct result. 

Finally, we find a particular solution to the differential equation (\ref{eq:cb}) using the summation of divergent series by applying the Euler method. We will express
\[\begin{array}{r@{\,=\,}l} 
y_p&\displaystyle\frac1{2D-4}\left(4xe^{2x}\sin 3x+2xe^{2x}\cos 3x\right)=\\ [1.3ex]
 &\displaystyle{-\frac14\left(1+\frac{D}2+\frac{D^2}4+\frac{D^3}8+\cdots\right)}
 \left(4xe^{2x}\sin 3x+2xe^{2x}\cos 3x\right)\\
\end{array}\]
or better in the matrix form with powers of $t$
\begin{equation}
\mbf{y}_p=-\frac14\sum_{k=0}^\infty\frac{t^k}{2^k}\mathcal{D}^k\mbf{g}
\end{equation}
where $\mbf{g}=\begin{bmatrix} 4 & 2 & 0 & 0 \end{bmatrix}^T$

It follows from Remark \ref{LM} that it is enough to take the truncated power series of degree 7 and find rational functions  $R_{[L/M]}(t)$, where $L=3$ and $M=4$. 
We get 
\[\begin{bmatrix}-\frac{8659{{t}^{7}}}{256}-\frac{2449{{t}^{6}}}{64}-\frac{841{{t}^{5}}}{64}+\frac{59{{t}^{4}}}{16}+\frac{101{{t}^{3}}}{16}+\frac{11{{t}^{2}}}{4}-\frac{t}{4}-1\\[1.1ex]
	-\frac{3863{{t}^{7}}}{64}-\frac{379{{t}^{6}}}{128}+\frac{67{{t}^{5}}}{4}+\frac{359{{t}^{4}}}{32}+\frac{7{{t}^{3}}}{4}-\frac{19{{t}^{2}}}{8}-2t-\frac{1}{2}\\[1.1ex]
	-\frac{17143{{t}^{7}}}{128}-\frac{2523{{t}^{6}}}{64}+\frac{295{{t}^{5}}}{32}+\frac{101{{t}^{4}}}{8}+\frac{33{{t}^{3}}}{8}-\frac{{{t}^{2}}}{4}-\frac{t}{2}\\[1.1ex]
	-\frac{2653{{t}^{7}}}{256}+\frac{201{{t}^{6}}}{4}+\frac{1795{{t}^{5}}}{64}+\frac{7{{t}^{4}}}{2}-\frac{57{{t}^{3}}}{16}-2{{t}^{2}}-\frac{t}{4}
\end{bmatrix}\]

The first and second components are calculated in Figure \ref{fig:comp}.  Note that the third and fourth components of the particular solution do not have to be calculated at all, because this part of the solution belongs to the kernel of the operator $D^2-4D+13$, as we have already mentioned.
\begin{figure}[!tbh]
	{\includegraphics[width=0.8\textwidth]{Pade-Maxima2.PNG}}
	\caption{Components of the solution - Euler summation method}\label{fig:comp}
\end{figure}
We get the same particular solution as in (\ref{eq:cd}).
So 
\[y_p=\frac13xe^{2x}\sin 3x-\frac23xe^{2x}\cos 3x\]

\section{Conclusion}

The content of this paper follows on from the paper \cite{F1}. It has a more or less theoretical character. It is not suitable for effectively finding a particular solution to a nonhomogeneous linear ODE with constant coefficients with a typical right-hand side. In the paper  we used the Euler method of summation of divergent series. The derived relations (\ref{eq:bg}) and (\ref{eq:bgg}) as well as Theorem \ref{th:moj} and its Corollary \ref{cor:dosl} and the algorithm for finding a particular solution using a matrix differential operator can be considered as the main results of the paper. We suggest that this method  with developed software applications could be used in courses teaching divergent series.

\end{document}